\theoremstyle{plain}
\newtheorem{thm}[subsection]{Theorem}
\newtheorem{lem}[subsection]{Lemma}
\newtheorem{prop}[subsection]{Proposition}
\newtheorem{cor}[subsection]{Corollary}
\newtheorem{conj}[subsection]{Conjecture}
\theoremstyle{definition}
\newtheorem{rk}[subsection]{Remark}
\newtheorem{prob}[subsection]{Problem}
\numberwithin{equation}{section}
\newcommand{\al}{\alpha}
\newcommand{\bb}{\mathbb}
\newcommand{\be}{\beta}
\newcommand{\Disc}{\mathrm{Disc}}
\newcommand{\ms}{\mathscr}
\newcommand{\p}{\partial}
\newcommand{\Res}{\mathrm{Res}}
\newcommand{\llangle}{\langle\hspace*{-1.5pt}\langle}
\newcommand{\rrangle}{\rangle\hspace*{-1.5pt}\rangle}
\begin{document}
\date{}

\title[Self-intersection number of negative curves]{Self-intersection Number of Negative Curves on Fermat Surfaces}
\author[ZHENJIAN WANG]{ZHENJIAN WANG}
\address{Hefei National Laboratory, Hefei 230088, China}
\email{wzhj@ustc.edu.cn}

\subjclass[2020]{Primary 14C17, Secondary 14H20.}

\keywords{Bounded Negativity Conjecture, Fermat surface, intersection multiplicity, Puiseux series, resultant}

\begin{abstract} 
We give an explicit formula for the self-intersection number of negative curves on Fermat surfaces. The formula offers us hints to prove the Bounded Negativity Conjecture for the Fermat surfaces by providing explicit negativity bounds, and it also offers the possibility of counterexamples to the conjecture.
\end{abstract}


\maketitle


\section{Introduction}

Let $\bb{P}^n$ be the complex projective space of dimension $n$. Let $X_d: x^d+y^d+z^d+w^d=0$ be the Fermat surface of degree $d$ in $\bb{P}^3$ with variables $x,y,z,w$, and let $D:x^d+y^d+z^d=0$ be the Fermat curve of degree $d$ in $\bb{P}^2$ with variables $x,y,z$. We shall study self-intersection number of negative curves on $X_d$. The main objective in this paper is to establish the following theorem.
\begin{thm}[Theorem \ref{main}]\label{BNCFermat}
Let $d\geq5$. Suppose that $C$ is a reduced and irreducible curve on the Fermat surface $X_d$ with self-intersection number $C^2<2-d<0$. Then the following properties hold.
\begin{enumerate}[(1)]
\item There exists a unique reduced and irreducible curve $E\subset\bb{P}^2$ defined by a degree $e$ polynomial $g(x,y,z)$, such that the cone in $\bb{P}^3$ determined by $g$ contains $C$. 
\item There exists $k\geq2$ and $k|d$ such that
\begin{align}\label{eq:main}
C^2=&\frac{d}{k}e^2-\frac{d(d-1)}{k}e-\frac{d}{k}\sum_{p\in D\cap E}\mu_p(E)\notag\\
   &+\sum_{[x,y,z]\in D\cap E}\mu_{[x,y,z,0]}(C)+\left(\frac{d}{k}-1\right)\#(D\cap E).
\end{align}
In particular, when $d$ is prime, we have $k=d$ and
\[
C^2=e^2-(d-1)e-\sum_{p\in D\cap E}\mu_p(E)+\sum_{[x,y,z]\in D\cap E}\mu_{[x,y,z,0]}(C).
\]
In the above formulae, $\mu_p(E)$ denotes the Milnor number of the plane curve $E$ at the point $p$, and $\mu_{[x,y,z,0]}(C)$ denotes the Milnor number of $C$ at $[x,y,z,0]$, and $\#(D\cap E)$ the number of points in the set $D\cap E$.
\end{enumerate}
\end{thm}

The technical condition $C^2<2-d$ in Theorem 1.1 is given to exclude exceptional cases where $C^2$ can be calculated explicitly. This ensures a concise formulation while maintaining generality. The exceptional cases are investigated in Section \ref{sec:str}.

The computation of self-intersection number of negative curves, as in Theorem 1.1, is closely connected with an old folklore conjecture among experts in algebraic surface theory, commonly known as the \textit{Bounded Negativity Conjecture}. We formulate this conjecture as follows.

\begin{conj}[Bounded Negativity Conjecture]\label{BNC1}
For any given smooth complex projective surface $X$, there exists a number $B_X\geq0$ which depends only on $X$ such that for any reduced curve $C$ on $X$, the self-intersection number $C^2$ is bounded below by $-B_X$, i.e., $C^2\geq-B_X$.
\end{conj}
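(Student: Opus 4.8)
Since the statement is the \emph{Bounded Negativity Conjecture}, which remains open in general, I do not expect a complete proof; instead I describe the one approach with genuine traction and pinpoint where it stalls. The backbone is the adjunction formula: for a reduced irreducible curve $C$ on the smooth surface $X$,
\[
C^2 = 2p_a(C)-2-K_X\cdot C,
\]
with $p_a(C)$ the arithmetic genus. Because $p_a(C)\ge p_g(C)\ge 0$ for a reduced irreducible curve, this gives the universal inequality $C^2\ge -2-K_X\cdot C$. Hence, \emph{for irreducible curves}, Conjecture \ref{BNC1} follows the moment one has a uniform upper bound for $K_X\cdot C$. The reduction from arbitrary reduced curves to irreducible ones needs care: writing $C=\sum_i C_i$ with distinct irreducible components, the cross terms $C_i\cdot C_j$ are nonnegative so $C^2\ge\sum_i C_i^2$, but the number of components is unbounded, so this step is not automatic and I would treat it separately from the irreducible core.

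The next step is to settle the cases in which the bound on $K_X\cdot C$ is free. If $-K_X$ is nef then $-K_X\cdot C\ge 0$, whence $C^2\ge -2$ and $B_X=2$ works; this covers del Pezzo surfaces and, more generally, surfaces with nef anticanonical class. If $K_X$ is numerically trivial — $K3$, abelian, Enriques and bielliptic surfaces — then $C^2=2p_a(C)-2\ge -2$ directly. The argument applies verbatim whenever $K_X\cdot C$ is bounded above over the negative curves. It does \emph{not}, however, dispose of rational surfaces in general: bounded negativity for blow-ups of $\mathbb{P}^2$ at many points is already open and tied to SHGH-type conjectures.

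The true obstacle is the general type case, where $K_X$ is big — indeed ample for the Fermat surfaces, since $K_{X_d}=\mathcal{O}_{X_d}(d-4)$ is ample for $d\ge 5$. Now $K_X\cdot C$ grows with $\deg C$ and the crude adjunction bound is useless; one must instead \emph{control}, rather than discard, the genus and singularities of a putative very negative curve. Writing $p_a(C)=p_g(C)+\delta(C)$ with $\delta(C)=\sum_p\delta_p(C)$ the total $\delta$-invariant, the quantity to bound below is $2p_g(C)+2\delta(C)-K_X\cdot C$; equivalently, one must show a negative curve cannot be too singular relative to its degree. This is precisely the data organized by Theorem \ref{BNCFermat}, which rewrites $C^2$ on $X_d$ through the degree $e$ of the plane curve $E$ carrying $C$ together with the local invariants along $D\cap E$, namely the Milnor numbers $\mu_p(E)$ and the local intersection multiplicities $\mu_{[x,y,z,0]}(C)$.

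My proposed line is thus: reduce to irreducible curves via adjunction; dispose of the $-K_X$-nef and numerically-trivial-canonical surfaces by $C^2\ge -2$; and for general type convert the conjecture, as in this paper, into a bound on the singularity invariants of negative curves. The hard part — and the reason the conjecture is still open — is exactly this last bound: for a surface of general type there is no known way to bound $\sum_p\mu_p(E)$, or equivalently $\delta(C)$, from above in terms of $e$ and $X$, so the potentially very negative contributions appearing in the formula of Theorem \ref{BNCFermat} cannot yet be estimated uniformly. The virtue of that formula is to make this single obstruction completely explicit on the Fermat surfaces, which then become the natural arena in which to prove or refute the conjecture.
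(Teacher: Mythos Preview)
The statement is a conjecture, not a theorem, and the paper does not prove it; you correctly recognize this and offer a survey of known approaches rather than a proof. Your discussion is largely accurate and aligns with the paper's own commentary: the adjunction argument yields $C^2\ge -2$ whenever $-K_X$ is nef (in particular for $X_d$ with $d\le 4$), while the genuine obstruction lies with surfaces of general type, where the paper's Theorem~\ref{main} isolates the singularity sum $\sum_{p\in D\cap E}\mu_p(E)$ as the term to control on Fermat surfaces.

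One point to amend: the reduction from arbitrary reduced curves to irreducible ones \emph{is} known and does not need to be left open. As the paper records immediately after stating Conjecture~\ref{BNC2}, it is proved in \cite[Prop.~5.1]{BauHarKnuKueMueRouSze1307} that if the irreducible version holds with bound $b_X$, then the reduced version holds with $B_X=\bigl(\rho(X)-1\bigr)b_X$, where $\rho(X)$ is the Picard number. The mechanism is not your naive inequality $C^2\ge\sum_i C_i^2$ (which, as you observe, fails to give a uniform bound because the number of components is unbounded), but rather the fact that distinct irreducible curves of negative self-intersection have independent numerical classes, so only boundedly many components of a reduced curve can contribute negatively. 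Thus Conjectures~\ref{BNC1} and~\ref{BNC2} are genuinely equivalent, and the problem reduces entirely to the irreducible case you focus on.
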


The origins of this conjecture are unclear, but it can at least be traced back to the Italian mathematician Federigo Enriques (1871-1946), and it has been an open problem for one hundred years. For a more comprehensive overview of the history, we refer to \cite[p.1878]{BauHarKnuKueMueRouSze1307}.

An alternative formulation of the Bounded Negativity Conjecture is as follows.

\begin{conj}[Bounded Negativity Conjecture for integral curves]\label{BNC2}
For any given smooth complex projective surface $X$, there exists a number $b_X\geq0$ which depends only on $X$ such that for any reduced and irreducible curve $C$ on $X$, the self-intersection number $C^2$ is bounded below by $-b_X$, i.e., $C^2\geq-b_X$.
\end{conj}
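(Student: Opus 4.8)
Since Conjecture \ref{BNC2} is offered as an \emph{alternative formulation} of Conjecture \ref{BNC1}, the substantive claim to establish is that the two are equivalent for a fixed surface $X$: a uniform bound $C^2\ge -B_X$ for all reduced curves exists if and only if a uniform bound $C^2\ge -b_X$ for all reduced \emph{and irreducible} curves exists. One direction is immediate, since every reduced irreducible curve is in particular reduced; thus Conjecture \ref{BNC1} forces Conjecture \ref{BNC2} with $b_X=B_X$. The whole content lies in the converse, and the plan is to bound $D^2$ for an arbitrary reduced $D$ using only the irreducible bound $b_X$ together with the Hodge index theorem.

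For the converse I would write $D=C_1+\cdots+C_s$ as the sum of its distinct irreducible components and, discarding the non-negative self-intersections and all the non-negative cross terms $C_i\cdot C_j\ge0$, reduce at once to the case where every component is negative: setting $\Gamma=\sum_{C_i^2<0}C_i$ one checks $D^2\ge\Gamma^2$. The naive estimate $\Gamma^2\ge\sum_i C_i^2\ge -b_X\cdot\#\{i:C_i^2<0\}$ is useless here, because $X$ may carry infinitely many negative curves (the $(-2)$-curves on a K3 surface, the $(-1)$-curves on a blown-up rational surface), so a reduced curve can involve arbitrarily many of them; the positive cross terms must therefore be kept and shown to dominate.

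The key point, and the step I expect to carry the proof, is a counting argument. Form the graph on the negative components in which $C_i$ and $C_j$ are joined when they meet, so that $C_i\cdot C_j\ge1$ by integrality of intersection numbers; any set of pairwise non-adjacent components consists of \emph{disjoint} negative curves, whose intersection matrix is diagonal and negative definite, so by the Hodge index theorem (the form on $\mathrm{NS}(X)_{\bb R}$ has signature $(1,\rho-1)$, where $\rho=\rho(X)$) such a set has at most $\rho-1$ elements. Thus the independence number of the graph is at most $\rho-1$, and Tur\'an's theorem forces many edges: with $m$ negative components the number of intersecting pairs is at least $\frac{m^2}{2(\rho-1)}-\frac m2$. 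Feeding this into
\[
\Gamma^2=\sum_i C_i^2+2\sum_{i<j}C_i\cdot C_j\ \ge\ -b_X\,m+2\Bigl(\frac{m^2}{2(\rho-1)}-\frac m2\Bigr)=\frac{m^2}{\rho-1}-(b_X+1)m
\]
and minimizing the right-hand side over $m$ yields $D^2\ge\Gamma^2\ge-\frac{(b_X+1)^2(\rho-1)}4$, a bound depending only on $X$ and $b_X$. The crux is thus the interplay of three ingredients — integrality of $C_i\cdot C_j$, the Hodge index bound on disjoint negative curves, and the Tur\'an estimate — and the main obstacle is precisely recognizing that the cross terms, far from being discarded, are exactly what render the number of negative components irrelevant.
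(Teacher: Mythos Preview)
Your reading of the task is the right one: Conjecture~\ref{BNC2} is a genuinely open conjecture, and the only thing to justify is its equivalence with Conjecture~\ref{BNC1}. Your argument for the nontrivial implication is correct. The reduction $D^2\ge\Gamma^2$ is fine since distinct irreducible curves have nonnegative intersection; the Hodge index step is clean (disjoint negative curves are automatically numerically independent, as a relation $\sum a_iC_i\equiv0$ paired with any $C_j$ forces $a_jC_j^2=0$); and the Tur\'an count is applied correctly. One should remark that the case $\rho(X)=1$ is handled separately (then there are no negative curves at all), but that is trivial.

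The paper does not give its own proof of this equivalence: it simply cites \cite[Prop.~5.1]{BauHarKnuKueMueRouSze1307}, which produces the sharper \emph{linear} bound $B_X=(\rho(X)-1)\,b_X$. Your Tur\'an-based argument yields instead the quadratic bound $B_X=\tfrac{(\rho-1)(b_X+1)^2}{4}$. So your route is genuinely different: it is more elementary and entirely self-contained, trading the finer structural information used in the cited argument for a combinatorial edge-count, at the price of a weaker constant. Both approaches rest on the same Hodge-index fact that a negative-definite configuration has at most $\rho-1$ components; the difference is in how that fact is leveraged to control a reduced curve with many components.
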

Indeed, as proven in \cite[Prop.5.1, p.1891]{BauHarKnuKueMueRouSze1307}, we can take the number $B_X$ in Conjecture \ref{BNC1} as $B_X=\left(\rho(X)-1\right)b_X$, where $\rho(X)$ is the Picard number of $X$. We will refer to $b_X$ as the \textit{negativity bound} for $X$.

The result in Theorem \ref{BNCFermat} implies that under some conditions, the Bounded Negativity Conjecture is true for the Fermat surface $X_d$.
\begin{cor}[Corollary \ref{sufcond}]\label{cor:sufcond}
The Bounded Negativity Conjecture holds for the Fermat surface $X_d$ if one of the following conditions is satisfied:
\begin{enumerate}[(a)]
\item
\[
\liminf_{e\to\infty}\inf_{E}\left(e^2-(d-1)e-\sum_{p\in D\cap E}\mu_p(E)\right)\geq -\lambda
\]
for some constant $\lambda>0$ which may depend on $d$; or
\item
\[
\limsup_{e\to\infty}\sup_E\frac{\sum\limits_{p\in D\cap E}\mu_p(E)}{e^2}<1;
\]
\end{enumerate}
where, in the above two formulas, the curve $E$ ranges over all irreducible projective plane curves of degree $e$.
\end{cor}

On the other hand, we hope that the formula for $C^2$ in Theorem \ref{BNCFermat} also provides the possibility of counterexamples to the Bounded Negativity Conjecture on Fermat surfaces, although no explicit counterexamples have been constructed.

In the sequel, we will only investigate the self-intersection number of reduced and irreducible curves. For convenience, we denote $C$ such a curve on a given smooth projective surface $X$. 

It is evident that the Bounded Negativity Conjecture holds for $X$ with $-K_X\geq0$, as can be seen by applying the adjunction formula:
\[
K_X\cdot C+C^2=2p_a(C)-2\Longrightarrow C^2\geq-2.
\]
In particular, for $d\leq 4$, the bounded negativity of $X_d$ follows from this argument, since the canonical divisor of $X_d$ is $K_{X_d}=(d-4)H$ where $H$ is the hyperplane section. However, for $d\geq5$, the canonical divisor $K_{X_d}$ is ample and $X_d$ is of general type. To the author's knowledge, there are currently no effective methods available to deal with the Bounded Negativity of such surfaces, except for the weak Bounded Negativity Conjecture proved in \cite{Hao1908}.

Another natural approach to establish the Bounded Negativity for a surface is to show that there are only finitely many negative curves on it. However, it is proved in \cite[Thm.4.1]{BauHarKnuKueMueRouSze1307} that for any $m>0$, there exists a smooth projective surface which admits infinitely many  smooth irreducible curves of self-intersection number $-m$. For the Fermat surface $X_d$, we do not know whether the number of negative curves on $X_d$ is finite or not.

Other strategies to prove the Bounded Negativity Conjecture include investigating the non-isomorphic surjective endomorphisms or establishing dominance by other surfaces that have bounded negativity \cite[Prop.2.1]{BauHarKnuKueMueRouSze1307}, exploring the birational invariance of bounded negativity and the $H$-constant \cite{BauDirHarHuiLunPokSze1501}, using the equivalence between bounded negativity and the boundedness of the denominators in the Zariski decomposition \cite{BauPokSch1712}, discussing the weighted Bounded Negativity Conjecture \cite{LafPok2001,GalMonMor2408,GalMonPer2501}, and studying the connections between the Bounded Negativity Conjecture and other unsolved problems, such as SHGH conjecture or the bounded cohomology conjecture \cite{BauBocCooDirDumHarJabKnuKueMirRoeSchZse1205,CilKnuLesLozMirMusTes1708}. 

However, despite providing invaluable insights in understanding the Bounded Negativity Conjecture, these methods either deal with only very special cases, e.g., rational surfaces or K3 surfaces, or just offer general frameworks that reduce the conjecture to other open questions. They rely on general formulae, such as the adjunction formula and Riemann-Roch formula, to establish bounds for the intersection number $C^2$ of a reduced and irreducible curve $C$, without using specific information about $C$, such as its equation. In addition, the negativity bound, even if exists, remains implicit in most cases.

 Can we estimate $C^2$ from the equation of the curve $C$?  Can we give a concrete example of a surface with the Bounded Negativity by performing explicit computation through the defining equation of the surface, thereby yielding an explicit negativity bound?

To explore these questions, we consider smooth surfaces in $\bb{P}^3$, and give the following conjecture.

\begin{conj}\label{cd}
For any $d\geq1$, there exists a constant $c_d$ that depends only on $d$ such that all smooth surfaces of degree $d$ in $\bb{P}^3$ have negativity bound $c_d$.
\end{conj}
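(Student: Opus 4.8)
The plan is to reduce the conjecture, via the adjunction formula, to a uniform lower bound on the arithmetic genus of curves on smooth degree-$d$ surfaces, and then to import the explicit plane-curve estimates of Theorem~\ref{BNCFermat} across the whole family. Since every smooth surface $X\subset\bb{P}^3$ of degree $d$ has $K_X=(d-4)H$, adjunction gives, for a reduced and irreducible curve $C$ with $m=H\cdot C$,
\[
C^2=2p_a(C)-2-(d-4)m .
\]
Hence the existence of a negativity bound $c_d$ depending only on $d$ is equivalent to a bound, uniform over all smooth degree-$d$ surfaces, of the form
\[
p_a(C)\geq \tfrac12(d-4)\,m-\tfrac12(c_d-2).
\]
So I would try to prove that the arithmetic genus of a curve on any smooth degree-$d$ surface grows at least linearly in its degree, with the slope $\tfrac12(d-4)$ dictated by the canonical class and a defect bounded in terms of $d$ alone.

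First I would dispose of the generic case. By the Noether--Lefschetz theorem a very general surface of degree $d\geq4$ has $\mathrm{Pic}(X)=\bb{Z}\,H$, so every curve is numerically $kH$ and $C^2=k^2d>0$; bounded negativity is automatic and no uniformity issue arises. All the content therefore lives on the countably many Noether--Lefschetz components parametrizing surfaces that carry extra divisor classes, the Fermat surface $X_d$ being the guiding example. This localizes the problem: it suffices to bound $\tfrac12(d-4)m-p_a(C)$ uniformly over each such special family, and then uniformly over the countably many families of a fixed degree.

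On a special surface I would try to run the projection method behind Theorem~\ref{BNCFermat} in reverse. Projecting a negative curve $C$ from a general point $o\notin X$ gives a birational plane model $\ov C\subset\bb{P}^2$ of the same degree $m$, and the gap $p_a(C)-g(C)=\delta(C)$ together with the contact of $\ov C$ with the branch curve of the projection account for the deviation of $C^2$ from its expected value. For Fermat that branch curve is exactly $D:x^d+y^d+z^d=0$, the contributions degenerate, and the local terms can be written explicitly through Milnor numbers and the quantity $\#(D\cap E)$; the desired genus bound is then precisely the statement that $\sum_{p}\mu_p$ is dominated by $m^2$ with leading constant less than one, which is condition (b) recorded after Theorem~\ref{BNCFermat}. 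The aim on a general special surface is to produce the analogous estimate, bounding the total singularity-and-contact contribution by $O_d(m^2)$ with the correct leading coefficient.

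The hard part will be the structural input, namely the analog of part (1) of Theorem~\ref{BNCFermat}. For Fermat the symmetry of the equation forces a sufficiently negative curve to lie on a cone $g(x,y,z)=0$, collapsing a space-curve problem to plane-curve geometry; for a general surface there is no symmetry to exploit, and one must instead control the singularities of the projected curve directly. Worse, the genus bounds available for very general surfaces degenerate precisely on the Noether--Lefschetz loci we care about, so the real obstacle is the passage from ``very general'' to ``all'': obtaining an estimate whose implied constant is uniform over an entire special family, and then over the infinitely many such families of a fixed degree. I expect this uniformity to require a genuinely new ingredient beyond the explicit computation carried out here for $X_d$.
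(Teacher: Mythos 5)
This statement is posed in the paper as an open conjecture: the paper offers no proof of it, and your proposal does not close it either --- by your own admission in the final paragraph. The correct parts of your plan are the two standard reductions: adjunction with $K_X=(d-4)H$ converts the negativity bound into a linear-in-degree lower bound on $p_a(C)$, and Noether--Lefschetz disposes of very general surfaces, where $\mathrm{Pic}(X)=\bb{Z}H$ forces $C^2>0$. But everything after that is a wish list rather than an argument, and the gaps are exactly where the content of the conjecture lives.

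Concretely, three things are missing. First, the structural input: the paper's Proposition \ref{curvST} (the analog of Theorem \ref{BNCFermat}(1)) rests entirely on the Sebastiani--Thom form $f(x,y,z)+w^d=0$, which makes $\rho:[x,y,z,w]\mapsto[x,y,z]$ a cyclic Galois cover so that the orbit of $C$ assembles into the pullback of a plane curve; a general smooth surface of degree $d$ admits no such cover, and your generic projection does not produce a substitute (note also that the paper's map $\rho|_C\colon C\to E$ has degree $d/k$, not $1$, and $\deg E=e$ differs from $H\cdot C=de/k$, so it is not the birational plane model you describe). Second, even granting the Fermat-type formula, ``importing the estimates of Theorem \ref{BNCFermat}'' cannot yield the conjecture because the paper itself does not deduce bounded negativity for $X_d$ from that formula: the needed inequality is precisely the unverified hypothesis (a) or (b) of Corollary \ref{sufcond}, recorded as an open problem in Section \ref{sec:dis}, where the paper moreover exhibits a hypothetical configuration (a rational cuspidal $E$ with its unique singular point on $D$) making $C^2\to-\infty$. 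Third, uniformity over the countably many Noether--Lefschetz families of a fixed degree is asserted but not addressed; the paper instead sidesteps this by a separate deformation-invariance conjecture, also unproven. So the proposal identifies the right obstacles but proves none of them; it should be presented as a research program, not a proof.
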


Since all the degree $d$ surfaces in $\bb{P}^3$ are contained in a connected family, they can be deformed into one another. We further conjecture that the bounded negativity property is invariant under deformations.

\begin{conj}\label{definv}
Suppose $X$ and $Y$ are two smooth complex projective surfaces belonging to a single family of complex projective surfaces, then the Bounded Negativity Conjecture is true for $X$ if and only if it is true for $Y$.
\end{conj}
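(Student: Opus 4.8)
The plan is to reduce the statement to a local, one-parameter situation and then to confront the variation of Hodge structure head-on. Since ``being in a single family'' means that $X$ and $Y$ are fibers $\mc{X}_s$ and $\mc{X}_t$ of a smooth projective morphism $\pi\colon\mc{X}\to T$ with $T$ connected, and any two points of a connected variety can be joined by a chain of maps from smooth curves, I would first reduce to the case where $T$ is a small disc $\Delta$ and compare only the special fiber $X_0$ with a general fiber $X_t$. By Ehresmann's theorem the fibers are all diffeomorphic, so $H^2(X_t,\bb{Z})$ is a local system carrying a fixed, $t$-independent intersection form $Q$, and the self-intersection $C^2$ of any curve is computed by $Q$ on its class. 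In these terms the Bounded Negativity Conjecture for a fiber asserts exactly that the values $Q([C],[C])$ over all reduced and irreducible curves $C$ on that fiber are bounded below.

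The transfer mechanism I would use is the relative Hilbert scheme $\mathrm{Hilb}(\mc{X}/\Delta)$. Given a very negative curve $C_0$ on $X_0$ with $C_0^2=-m$, I would decide whether $C_0$ deforms to neighbouring fibers by analysing the normal-bundle sequence
\[
0\longrightarrow N_{C_0/X_0}\longrightarrow N_{C_0/\mc{X}}\longrightarrow N_{X_0/\mc{X}}|_{C_0}\longrightarrow 0,
\]
in which $N_{X_0/\mc{X}}\cong\mc{O}_{X_0}$ because $X_0$ is a fiber, so the last term restricts to $\mc{O}_{C_0}$. The first-order obstruction to moving $C_0$ in the $\Delta$-direction is then read off from the image of $1$ under the connecting homomorphism $H^0(\mc{O}_{C_0})\to H^1(N_{C_0/X_0})$, and equivalently from whether the class $[C_0]$ remains of Hodge type $(1,1)$ along the family. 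This reformulates the persistence of a negative curve as a Hodge-theoretic condition on its cohomology class.

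The hard part, and in my view the decisive obstacle, is exactly the Noether--Lefschetz phenomenon hidden in the previous step. The class $[C_0]$ lies in $\mathrm{NS}(X_0)=H^{1,1}(X_0)\cap H^2(X_0,\bb{Z})$, but under the local system it need not stay $(1,1)$ on nearby fibers, so the Picard rank $\rho$ drops and $C_0$ simply fails to persist; conversely the very general fiber can carry drastically fewer curves, since for the family of all smooth surfaces of degree $d\geq4$ in $\bb{P}^3$ the general member has $\rho=1$, whence every irreducible curve $C$ satisfies $C^2\geq0$ and the conjecture holds there trivially. Thus a proof cannot proceed by deforming individual negative curves; it must instead show that the mere finiteness of the negativity bound survives these jumps---either by producing, for each sequence of increasingly negative curves on special fibers, a uniform bound extracted from the global geometry of the total space $\mc{X}$, or by a semicontinuity argument controlling how the negative values $Q([C],[C])$ can accumulate as the Picard rank jumps. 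Because the general fiber satisfies the conjecture with bound $0$ while the special fibers are precisely the difficult cases, any such argument would in effect have to derive the full Bounded Negativity Conjecture on every fiber from the trivial bound on the very general one, so I expect this step to be genuinely hard.

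Finally, I would use the explicit formula of Theorem~\ref{BNCFermat} as a concrete testing ground inside the degree-$d$ family: since the Fermat surface $X_d$ sits in a family whose very general member has Picard rank one, the conjecture predicts that the right-hand side of that formula stays bounded below as the plane curve $E$ varies. This gives both a consistency check on the conjecture and a possible route to a contradiction, in line with the paper's own discussion, should the Milnor-number sum in the formula ever come to dominate $e^2$.
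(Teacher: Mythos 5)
The statement you were asked to prove is stated in the paper as a \emph{conjecture}, with no proof offered; the paper introduces it only as a heuristic that would reduce the Bounded Negativity Conjecture for all smooth degree-$d$ surfaces in $\bb{P}^3$ to the single case of the Fermat surface. Your proposal does not close this gap, and to your credit you essentially say so: after setting up the deformation-theoretic framework (Ehresmann, the local system $H^2(X_t,\bb{Z})$ with its fixed intersection form, the relative Hilbert scheme, the normal-bundle sequence with $N_{X_0/\mc{X}}\cong\mc{O}_{X_0}$) you correctly identify the Noether--Lefschetz obstruction and then conclude that the decisive step is ``genuinely hard'' without supplying it. That is an accurate diagnosis, not a proof. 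The concrete gap is this: your transfer mechanism operates curve by curve, but, as you yourself observe, a negative curve $C_0$ on a special fiber generically does \emph{not} persist to nearby fibers because its class leaves $H^{1,1}$, so no information about $C_0^2$ is transported anywhere. The fallback you gesture at --- a ``semicontinuity argument controlling how the negative values $Q([C],[C])$ can accumulate as the Picard rank jumps'' --- is named but not constructed, and no such mechanism is known; indeed your own observation that the very general degree-$d$ surface has $\rho=1$ (hence satisfies bounded negativity with bound $0$ for trivial reasons) shows that any proof of the conjecture would have to manufacture the entire content of bounded negativity on the special fibers out of a vacuous statement on the general one. So the argument terminates exactly where the mathematical content would have to begin.

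What your write-up does contribute is a sharpening of why the conjecture is delicate: it makes explicit that the equivalence cannot be proved by deforming individual curves, and that the ``only if'' direction (from the Picard-rank-one general member down to special members such as $X_d$) is the whole difficulty. This is consistent with the paper's stance, which treats the conjecture as an open assumption and instead attacks the Fermat case directly via the explicit formula of Theorem \ref{main}. But as a proof of the stated equivalence, the proposal is incomplete: the central step is acknowledged rather than carried out.
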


Assuming the conjectures \ref{cd},\ref{definv} above, we are motivated to prove the validity of the Bounded Negativity Conjecture for a specific surface in $\bb{P}^3$. Theorem \ref{BNCFermat} is proved for this purpose. 

Note that a general surface in $\bb{P}^3$ has Picard number 1, which follows from the Noether-Lefschetz theorem; hence for a general surface $X\subset\bb{P}^3$, we may take $b_X=0$. Therefore, the Bounded Negativity Conjecture for surfaces in $\bb{P}^3$ follows from Conjecture \ref{definv} without explicit computation. However, the Fermat surface does not have Picard number 1, and our computation in Theorem \ref{BNCFermat} is of independent interest, particularly in providing a nontrivial explicit bound $c_d$ in Conjecture \ref{cd}.

The formulae in Theorem \ref{BNCFermat} show how the Bounded Negativity Conjecture is connected with the singularity theory of curves. The key term in formula \eqref{eq:main} is the sum $\sum\limits_{p\in D\cap E}\mu_p(E)$. It is notable that this sum is taken over the intersection points of $D$ and $E$, not all the singular points of $E$.

To conclude the introduction, we give an outline of the proof of Theorem \ref{BNCFermat}. We represent the Fermat surface $X_d$ as a branched Galois cover of $\bb{P}^2$:
\[
\rho:\ X_d\to\bb{P}^2,\quad[x,y,z,w]\mapsto[x,y,z].
\]
Given a negative curve $C$ on $X_d$, let $E=\rho(C)$ and $\ms{C}=\rho^*E$. Then $E$ is a reduced and irreducible curve in $\bb{P}^2$, whose defining equation is denoted by $g=g(x,y,z)$. It is clear that $g$ can be seen as a section of the line bundle $\ms{O}_{X_d}\left((\deg g)H\right)$ which vanishes on $C$. Then
we prove that $\ms{C}$ is a reduced curve on $X_d$; see Proposition \ref{intnum}. Moreover, $\ms{C}$ is of the following form
\[
\ms{C}=C_1+\cdots+C_k,
\]
with $C_i^2=C$ for all $i=1,\ldots, k$. This result holds because $C_i$'s lie in the orbit of $C$ under the Galois group action for $\rho$. It follows that
\[
kC^2=d(\deg g)^2-\sum_{i\neq j}C_i\cdot C_j.
\]
As a result, to compute $C^2$, we need to calculate the sum of intersection numbers on the right hand side. Finally, the intersection numbers can be calculated in a local way at any intersection point. Locally, the curve $\ms{C}$ can be represented as the curve in $(\bb{C}^3,0)$ defined by
\[
h(u,v)=0,\ l(u,v)+\omega^d=0.
\]
We will first solve $v$ as Puiseux series of $u$ from the first equation, and then use $l\left(u,v(u)\right)+\omega^d=0$ to investigate the intersection multiplicities between different irreducible components of $\ms{C}$. This is the most technical part in this paper and will be completed in Proposition \ref{intnum} and Section \ref{sec:intnum}.

\bigskip

The author would like to thank the organizers of International Symposium on Singularities and Applications, Dec. 9--13, 2024 in Sanya, China. The stimulating atmosphere of the conference inspired him to conceive the idea of performing computation using Puiseux series. The author also thanks Professor Feng Hao for helpful discussions.

\section{Curves on surfaces of Sebastiani-Thom type}
If $X\subset\bb{P}^3$ is a smooth surface defined by the following equation:
\[
X: f(x,y,z)+w^d=0,
\]
where $f$ is a form of degree $d$, then $X$ is said to be of \textit{Sebastiani-Thom type}. The Fermat surface $X_d$ is a typical example. 

We show that a curve on $X$ is ``almost'' defined by a homogeneous polynomial in the following result.

\begin{prop}\label{curvST}
Let $X: f(x,y,z)+w^d=0$ be a smooth surface in $\bb{P}^3$ and $C$ a reduced and irreducible curve on $X$. Then there exists a unique irreducible form $g(x,y,z)$ up to a multiplicative constant, such that the divisor on $X$ cut out by $g$ can be written as
\[
(g)=m(C_1+\cdots+C_k),
\]
where $C_1=C,C_2,\ldots, C_k$ are distinct reduced and irreducible curves and $m\geq1$ is an integer,
and, in addition, the self-intersection numbers $C_i^2, i=1,\ldots, k$ are all equal to $C^2$.
\end{prop}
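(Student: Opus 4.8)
The plan is to exploit the cyclic cover structure of a Sebastiani--Thom surface. Since the center of projection $[0,0,0,1]$ does not lie on $X$ (as $f(0,0,0)+1=1\neq0$), the projection
\[
\rho\colon X\to\bb{P}^2,\qquad [x,y,z,w]\mapsto[x,y,z]
\]
is a well-defined finite morphism of degree $d$: a general line through $[0,0,0,1]$ meets $X$ in the $d$ points given by $w^d=-f(x,y,z)$. The group $\mu_d$ of $d$-th roots of unity acts on $X$ by $\sigma_\zeta\colon[x,y,z,w]\mapsto[x,y,z,\zeta w]$, and since $X$ is smooth and connected these are $d$ distinct automorphisms with $\rho\circ\sigma_\zeta=\rho$. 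Thus $\rho$ is a Galois cover with group $G\cong\mu_d$, and $\bb{C}(X)/\bb{C}(\bb{P}^2)$ is the associated cyclic extension.

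First I would set $E=\rho(C)$. Because $\rho$ is finite and $C$ is an irreducible curve, $E$ is an irreducible plane curve, cut out by a single irreducible form $g(x,y,z)$; this is the $g$ we seek. Pulling back, the divisor cut out by $g$ on $X$ is $(g)=\rho^*E$. Again because $\rho$ is finite, no component of $\rho^{-1}(E)$ can contract to a point, so every irreducible component of $(g)$ dominates $E$; in particular $C$ is one of them. Identifying prime divisors on the smooth (hence normal) surface $X$ with discrete valuations of $\bb{C}(X)$, the components of $(g)$ correspond exactly to the valuations extending the valuation $v_E$ of $\bb{C}(\bb{P}^2)$, and the standard Galois theory of extensions of valuations shows that $G$ permutes them transitively. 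Writing $C=C_1,C_2,\dots,C_k$ for these distinct components (the $G$-orbit of $C$), I obtain $(g)=\sum_i m_iC_i$.

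The equality of the multiplicities $m_i$ then follows from the relation $\rho\circ\sigma_\zeta=\rho$, which forces $\sigma_\zeta^*(g)=(g)$, so that the divisor $(g)$ is $G$-invariant. Hence each $\sigma_\zeta$ permutes the $C_i$ while preserving their multiplicities, and transitivity of the $G$-action gives $m_1=\cdots=m_k=:m\geq1$, establishing (i). For (ii), for each $i$ pick $\sigma\in G$ with $\sigma(C_1)=C_i$; as $\sigma$ is an automorphism of $X$ it preserves the intersection pairing, whence $C_i^2=(\sigma C_1)^2=C_1^2=C^2$. I expect the main obstacle to be the second paragraph: one must justify carefully that $\rho$ is finite with no contracted components and that $G$ acts transitively on the components of $\rho^{-1}(E)$ with a common ramification index, i.e.\ that the decomposition of $(g)$ is governed entirely by the Galois theory of the cyclic cover. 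The remaining assertions are then formal consequences of $G\subset\Aut(X)$ preserving divisors and the intersection form.
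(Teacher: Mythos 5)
Your proposal is correct and follows essentially the same route as the paper: project to $\bb{P}^2$ via the degree-$d$ cyclic Galois cover, take $g$ to be the irreducible equation of $E=\rho(C)$, use transitivity of the Galois action on the components of $\rho^*E$ together with the $G$-invariance of $(g)$ to equalize the multiplicities, and use that the components lie in a single $\Aut(X)$-orbit to conclude $C_i^2=C^2$. The only difference is cosmetic: you justify the transitivity via decomposition groups of valuations, where the paper simply invokes the Galois property of the cover.
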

\begin{proof}
Consider the following projection morphism
\[
\rho: X\to\bb{P}^2,\ [x,y,z,w]\mapsto[x,y,z].
\]
It is a branched Galois covering map with Galois group $\bb{Z}_d=\bb{Z}/d\bb{Z}$, the Galois action given by
\[
a\cdot[x,y,z,w]=[x,y,z,e^{2\pi\sqrt{-1}a/d}w],\ \forall a\in\bb{Z}_d.
\]

Denote $E=\rho(C)$. Then $E$ is a reduced and irreducible curve in $\bb{P}^2$. 

Let $g(x,y,z)$ be an irreducible defining equation for $E$. Then the inverse image $\rho^{-1}(E)$ is exactly the curve defined by $g=0$ on $X$ (here we see $g$ as a homogeneous polynomial in $x,y,z,w$ restricted to $X$). Write the divisor $(g)$ cut out by $g$ on $X$ as
\[
(g)=m_1C_1+\cdots+m_kC_k,\ C_1=C,
\]
where $C_1,\ldots, C_k$ are reduced and irreducible curves on $X$ and $m_1,\ldots, m_k\geq1$. 

Since the action $\bb{Z}_d$ for the morphism $\rho$ is a Galois action, $\rho^{-1}(E)$ is exactly the union of the curves in the $\bb{Z}_d$-orbit of $C$. It follows that for each pair $i\neq j$, there exists an element $\al$ in the Galois group $\bb{Z}_d$ such that $\al\cdot C_i=C_j$.
Now it is obvious that $g$ is a $\bb{Z}_d$-invariant polynomial in $(x,y,z,w)$; thus, 
\[
m_i=\mathrm{ord}_{C_i}(g)=\mathrm{ord}_{\al\cdot C_i}(\al\cdot g)=\mathrm{ord}_{C_j}(g)=m_j.
\]
Hence $m_1=\cdots=m_k=m$ for some $m\geq1$. In addition, since $C_i,i>1$ are all isomorphic to $C$ by automorphisms of $X$, we have $C_i^2=C^2$ for all $i=1,\ldots, k$.

It remains to show that the polynomial $g$ is unique up to a constant factor. Indeed, $g$ must be the defining polynomial of the irreducible curve $E$ since $g$ depends only the variables $x,y,z$, without involving the variable $w$. 
\end{proof}

\section{Strategy of Proof of Theorem \ref{BNCFermat}}\label{sec:str}
Let $X_d: x^d+y^d+z^d+w^d=0$ be the Fermat surface in $\bb{P}^3$ and $C\subset X_d$ a reduced and irreducible curve. We shall investigate the lower bound of the self-intersection number $C^2$. 

By Proposition \ref{curvST}, there exists an irreducible polynomial $g(x,y,z)$ which cuts out a divisor on $X_d$ of the following form
\[
(g)=m(C_1+\cdots+C_k),\ C_1=C
\]
and $C_i^2=C^2$ for $i=1,\ldots,k$. 

If $k=1$, we have $m^2C^2=d(\deg g)^2$ and thus $C^2\geq0$. Therefore, to investigate the Bounded Negativity Conjecture for $X_d$, we assume $k\geq2$ in the sequel.

Recall that $D:x^d+y^d+z^d=0$ is the Fermat curve. Let $ E:g(x,y,z)=0$
be the plane curve defined by $g$ in $\bb{P}^2$, and let $e=\deg g=\deg E$.

Furthermore, denote $\ms{C}$ the scheme on $X_d$ defined by $g=0$:
\[
\ms{C}: g(x,y,z)=0.
\]
Then $C_1,\ldots, C_k$ are the irreducible components of $\ms{C}$.

\subsection{Strategy}
From the equation
\[
(g)=m(C_1+\cdots+C_k),
\]
we get
\begin{align}\label{eq:C2}
de^2&=m^2(C_1+\cdots+C_k)^2\notag\\
    &=m^2\sum_{i=1}^kC_i^2+m^2\sum_{i\neq j}C_i\cdot C_j\notag\\
    &=m^2kC^2+m^2\sum_{i\neq j}C_i\cdot C_j,
\end{align}
where $C_i\cdot C_j$ denotes the intersection number of the curves $C_i$ and $C_j$ on $X_d$.

Our aim is to derive a lower bound for $C^2$. From formula \eqref{eq:C2}, we are led to establish an upper bound for the sum
\[
\sum_{i\neq j}C_i\cdot C_j,
\]
which will yield a lower bound for the following difference
\[
de^2-m^2\sum_{i\neq j}C_i\cdot C_j.
\]
If this difference can be bounded from below as a function in $e\geq1$ when $d$ is viewed as a constant, then the Bounded Negativity Conjecture for $X_d$ will follow.

In the main part of this paper, we will calculate the intersection numbers $C_i\cdot C_j$ through local computation methods. The key results are summarized in Proposition \ref{intnum}, and the lengthy proof will be presented in Section \ref{sec:intnum}.

For the local computation, fix an arbitrary intersection point $P=[x_0,y_0,z_0,w_0]$ of any pair of curves $C_i$ and $C_j$ with $i\neq j$, and we will give an explicit formula for the sum
\[
\sum_{i\neq j}I_P(C_i,C_j),
\]
where $I_P(C_i,C_j)$ denotes the intersection multiplicity of $C_i$ and $C_j$ at the point $P$. 

 From $P\in X_d$, we have $x_0^d+y_0^d+z_0^d+w_0^d=0$, hence not all $x_0,y_0,z_0$ are zero. By the symmetry of the coordinates $x,y,z$ in the equation $X_d: x^d+y^d+z^d+w^d=0$, we may without loss of generality assume that $x_0\neq0$. To decide where the intersection point $P=[x_0,y_0,z_0,w_0]$ may situate on $X_d$, we propose the following lemma which will be proved at the end of this section.

\begin{lem}\label{w0}
Following the notations above, for any point $Q$ lying on at least two irreducible components of $\ms{C}$, we have
\[
w(Q)=0,
\]
where $w$ is the fourth homogeneous coordinate for $\bb{P}^3$. Moreover, $Q\in C_i$ for all $i=1,\ldots, k$, i.e., $Q$ lies on every irreducible component of $\ms{C}$.

In particular, for the point $P=[x_0,y_0,z_0,w_0]$ above, we have $w_0=0$ and $P\in C_i$ for all $i\leq k$.
\end{lem}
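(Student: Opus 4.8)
The plan is to recast the conclusion as a statement about ramification. On $X_d$ one has $w^d=-(x^d+y^d+z^d)$, so for $Q\in X_d$ the condition $w(Q)=0$ is equivalent to $\rho(Q)\in D$, i.e.\ to $Q$ being a ramification point of the Galois cover $\rho$; away from $D$ the group $\bb{Z}_d$ acts freely and $\rho$ is \'etale. Since $\ms{C}=\rho^{-1}(E)$ and $Q\in\ms{C}$ forces $p:=\rho(Q)\in E$, the first assertion is equivalent to the statement that two \emph{distinct} components of $\ms{C}$ can meet only over $E\cap D$. I would therefore prove the contrapositive: over every $p\in E\setminus D$ the components $C_1,\dots,C_k$ are pairwise disjoint.

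First I would treat the generic case by a counting argument. Fix $p\in E\setminus D$; the fibre $\rho^{-1}(p)=\{Q_a=a\cdot Q_0\}_{a\in\bb{Z}_d}$ consists of $d$ distinct points, freely permuted by $\bb{Z}_d$ and all contained in $\ms{C}$. By Proposition \ref{curvST} the components form a single $\bb{Z}_d$-orbit; writing $H$ for the stabiliser of $C_1$, one has $|H|=\ell:=d/k$ and $H=\langle k\rangle$ (the unique subgroup of that order), so each $\rho|_{C_r}\colon C_r\to E$ is finite surjective of degree $\ell$. If in addition $p$ is a smooth point of $E$, then, $\rho$ being a local isomorphism over $p$, each $C_r$ is unramified over $p$ and meets the fibre in exactly $\ell$ points; since $\sum_{r}\ell=k\ell=d$ equals the number of fibre points, the $k$ components partition $\rho^{-1}(p)$ and no point lies on two of them. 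Concretely $S_r:=\{a:Q_a\in C_r\}$ is a union of $H$-cosets, so an overlap would give $|S_r|\ge 2\ell$ and hence $\sum_r|S_r|\ge 2d$, which is impossible. This settles disjointness over the smooth locus of $E$ lying off $D$.

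The hard part is the singular points of $E$ lying off $D$, and I expect this to be the main obstacle. There $\rho$ is still \'etale, so at each point over $p$ the curve $\ms{C}$ reproduces exactly the branches of $E$ at $p$, and the counting above fails because one component may carry several branches. I would resolve this through the cyclic description $\ms{C}=\{w^d=-f|_E\}$, where $f=x^d+y^d+z^d$: because $k\ge2$, the restriction $-f|_E$ is a $k$-th power $a^k$ in the function field $\bb{C}(E)$, the $k$ components are $\{w^{d/k}=\zeta_k^{\,j}a\}_{j}$, and two branches at a point over $p$ lie in the \emph{same} component precisely when $a$ takes equal values along the two branches of $E$ at $p$. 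Off $D$ the function $a$ is a unit and is integral over the local ring of $E$ at $p$, so this comparison becomes a purely local question, to be answered by the Puiseux-series computation carried out in Proposition \ref{intnum} and Section \ref{sec:intnum}; over $E\cap D$, by contrast, $a$ vanishes and all sheets collapse to $w=0$, which is exactly the permitted behaviour.

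Finally, the \emph{moreover} part is immediate once $w(Q)=0$ is known: then $p=\rho(Q)\in E\cap D$ and $\rho^{-1}(p)=\{Q\}$ is the single totally ramified point over $p$, while each $C_r$ maps finitely onto the irreducible curve $E$ and so must meet $\rho^{-1}(p)$. Hence $Q\in C_r$ for every $r$, which is the last claim.
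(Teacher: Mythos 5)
Your reduction of the statement to ``distinct components meet only over $E\cap D$'' and your treatment of the smooth points of $E$ off $D$ are correct: over such a point $\rho$ is a local isomorphism, so the germ of $\ms{C}$ at each fibre point is a single smooth branch and lies on exactly one component (your coset count $\sum_r|S_r|=k\ell=d$ says the same thing). The genuine gap is exactly where you flag it: a point $Q$ with $w(Q)\neq0$ lying over a \emph{multibranch} singular point $p$ of $E$ with $p\notin D$. There the bound $|S_r|\leq\ell$ fails, because a finite map of degree $\ell$ onto a curve can have more than $\ell$ points in the fibre over a non-unibranch point (the normalization of a nodal cubic has degree $1$ and two points over the node). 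Your proposed resolution --- that the comparison of the values of the root $a$ on the two branches is ``to be answered by the Puiseux-series computation carried out in Proposition \ref{intnum} and Section \ref{sec:intnum}'' --- is circular: those computations are performed only at points of $D\cap E$ and take the present lemma as input (via Lemma \ref{pos}) to know that these are the only intersection points. So your proposal leaves open precisely the case you correctly identify as the crux.

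For comparison, the paper's own proof is the bare assertion that a point on two components satisfies $\#\rho^{-1}(\rho(Q))<d$, with no justification; it silently skips the same case. Your analysis shows the assertion is not formal: the $k$-th root of $-f|_E$ need only be integral over $\ms{O}_{E,p}$, not an element of it, so it may take distinct values on distinct branches of $E$ at a node off $D$, in which case those branches lie on different components and the conclusion fails at that point. This really happens for $d=2$: a curve $C_1$ of bidegree $(2,1)$ on the quadric $x^2+y^2+z^2+w^2=0$ meets its image under the deck involution in $(2,1)\cdot(1,2)=5$ points but meets the ramification conic (bidegree $(1,1)$) in only $3$, so two intersection points have $w\neq0$; they lie over the node of the image cubic, which is off $D$. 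Hence a correct proof must either impose a hypothesis excluding multibranch singularities of $E$ away from $D$ or supply a genuinely new argument for that case; neither your proposal nor the paper's one-line fibre-count does so.
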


Let $p=\rho(P)=[x_0,y_0,z_0]$. Then $p\in E\cap D$.

To calculate the intersection multiplicities at $P$, we introduce affine coordinates
\[
u=\frac{y}{x}-\frac{y_0}{x_0},\qquad v=\frac{z}{x}-\frac{z_0}{x_0},\qquad \omega=\frac{w}{x}.
\]
Then the curve $\ms{C}: g=0$ on $X_d$ is defined in the affine piece $\bb{C}^3=\{x\neq0\}\subset\bb{P}^3$ by
\[
h(u,v)=0,\quad l(u,v)+\omega^d=0,
\]
where
\[
h(u,v)=g\left(1,u+\frac{y_0}{x_0},v+\frac{z_0}{x_0}\right)
\]
and
\[
l(u,v)=1+\left(u+\frac{y_0}{x_0}\right)^d+\left(v+\frac{z_0}{x_0}\right)^d.
\]

Moreover, since $x_0^d+y_0^d+z_0^d=0$ and $x_0\neq0$, at least one of $y_0,z_0$ is nonzero. We may without loss of generality assume that $z_0\neq0$. Then 
\[
\frac{\p\left(l(u,v)+\omega^d\right)}{\p v}\Bigg|_{(0,0,0)}=d\left(\frac{z_0}{x_0}\right)^{d-1}\neq0.
\]
 Therefore, we can locally express $v$ as an analytic function of $(u,\omega)$ by applying the implicit function theorem to the equation $l(u,v)
+\omega^d=0$. Then the local equation for $\ms{C}$ can be expressed as $h\left(u,v(u,\omega)\right)=0$. By analyzing this local equation, we can investigate the local branches and compute the local intersection multiplicities. This is the usual way to analyze the curve $\ms{C}$.

Our method is, however, first to solve $v$ from the equation $h(u,v)=0$ using Puiseux series, say $v=s(u)$, and then study the properties of the curve $\ms{C}$ and calculate the intersection multiplicities using the equation $l(u,s(u))+\omega^d=0$. To this end, we need that $h$ is $v$-general, i.e., $h(0,v)$ is not identically zero as a function in $v$.

At this point, we specially consider the exceptional case where $h$ is not $v$-general, or equivalently, $h(0,v)\equiv0$; the $v$-general case will be investigated in Section \ref{sec:intnum}.

Since $h$ is transformed by an affine coordinate change from an irreducible polynomial $g$, it is an irreducible polynomial in $u,v$. If $h$ is not $v$-general, then $u$ is a factor of $h$; therefore, by irreducibility, $h$ is a constant multiple of $u$. Transforming $h$ back to $g$, we see that $g$ is a linear function $y-ax$ for some $a\in\bb{C}$ (up to multiplicative factor). As a result, the curve $\ms{C}$ is defined as
\[
\ms{C}: y=ax,\ (1+a^d)x^d+z^d+w^d=0.
\]

If $a^d+1\neq0$, then $\ms{C}$ is itself irreducible and hence it must be equal to $C$. We have $C^2=d$ in this case.

If $a^d+1=0$, then $\ms{C}$ splits into $d$ lines. Let 
\[
L_i: y=ax,\ z=e^{(1+2i)\pi\sqrt{-1}/d}w,
\]
for $i=1,\ldots, d$, then
\[
(g)=L_1+\cdots+L_d.
\]
It is straightforward that $L_i\cdot L_j=1$ for $i\neq j$. Hence, by the above discussions, we have
\[
d=dC^2+d(d-1),
\]
and so $C^2=2-d$.

\subsection{Proof of Lemma \ref{w0} }Recall that we have the degree $d$ branched Galois cover $\rho:X_d\to\bb{P}^2$ defined by $[x,y,z,w]\mapsto[x,y,z]$. The branch locus of $\rho$ is the Eermat curve $D:x^d+y^d+z^d=0$, and the ramification locus of $\rho$ is the inverse image $\rho^{-1}(D)$ which is defined by the equation $w=0$ on $X_d$.

Assume now that $Q$ is a point on at least two components of $\ms{C}$. Denote $q=\rho(Q)$. Then $\#\left(\rho^{-1}(q)\right)<d$, where $\# A$ means the number of points in a set $A$. Hence $q$ lies on the branch locus $D$ and then $Q\in\rho^{-1}(D)$, so $w(Q)=0$ as desired.
Furthermore, we can easily see that $\rho^{-1}(q)=\{Q\}$.  For all $i\leq k$, the restriction morphism $\rho|_{C_i}: C_i\to E$ is a finite morphism and thus $\rho^{-1}(q)\cap C_i\neq\emptyset$. But $\rho^{-1}(q)$ consists of only one point $Q$, it follows that $Q\in C_i$. We are done.

\section{Self-intersection number for curves on Fermat surfaces}

Following the notations above, we will give an explicit formula for the self-intersection number $C^2$. 

\subsection{Position of intersection points} 
From Lemma \ref{w0}, we see that any intersection point $Q$ of two irreducible components of $\ms{C}$ satisfies $w(Q)=0$. It follows that $q=\rho(Q)$ is an intersection point of $D$ and $E$. The converse is also true.
\begin{lem}\label{pos}
If $q=[x,y,z]$ lies on both $D$ and $E$, then $Q=[x,y,z,0]$ lies on all of $C_i$ for $i\leq k$.

Hence, $Q\in C_i\cap C_j$ for $i\neq j$ if and only if $q\in D\cap E$.
\end{lem}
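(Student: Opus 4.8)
The plan is to prove Lemma \ref{pos} by establishing the converse direction and then combining it with Lemma \ref{w0} to obtain the stated equivalence. The forward direction---that any intersection point $Q$ of two components of $\ms{C}$ has $\rho(Q)\in D\cap E$---is already noted immediately before the lemma, using Lemma \ref{w0}. So the real content is the converse: given $q=[x,y,z]\in D\cap E$, I must show that the single point $Q=[x,y,z,0]$ lies on every irreducible component $C_i$.

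First I would verify that $Q=[x,y,z,0]$ is genuinely a point of $X_d$ and of $\ms{C}$. Since $q\in D$ means $x^d+y^d+z^d=0$, setting $w=0$ gives $x^d+y^d+z^d+0^d=0$, so $Q\in X_d$. Since $q\in E$ means $g(x,y,z)=0$, and $g$ does not involve $w$, the point $Q$ lies on $\ms{C}:g=0$. The crucial observation is that $Q$ lies on the ramification locus $\{w=0\}$ of $\rho$, so that $\rho^{-1}(q)=\{Q\}$ is a single point: indeed, the fiber over $q$ consists of the points $[x,y,z,w]$ with $w^d=-(x^d+y^d+z^d)=0$, forcing $w=0$ and hence $Q$ is the unique preimage. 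This is exactly the mechanism already exploited in the proof of Lemma \ref{w0}.

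Next I would run the same fiber-counting argument as in the final paragraph of the proof of Lemma \ref{w0}, but now in reverse. For each $i\leq k$, the restriction $\rho|_{C_i}:C_i\to E$ is a finite surjective morphism of irreducible curves (surjective because $E=\rho(C)$ and the $C_i$ are Galois translates of $C$, each mapping onto $E$). Finiteness and surjectivity give $\rho^{-1}(q)\cap C_i\neq\emptyset$. But $\rho^{-1}(q)=\{Q\}$, so $Q\in C_i$. As this holds for every $i$, the point $Q$ lies on all the $C_i$, and in particular $Q\in C_i\cap C_j$ for any $i\neq j$.

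Finally, combining both directions yields the equivalence ``$Q\in C_i\cap C_j$ for some $i\neq j$ iff $q\in D\cap E$.'' The main---indeed the only---subtle point is justifying that $\rho|_{C_i}$ is surjective onto $E$, so that the fiber over $q$ meets every component; this follows because all $C_i$ lie in the Galois orbit of $C$ and $\rho$ is equivariant, so $\rho(C_i)=\rho(C)=E$, and a dominant morphism between projective curves is surjective. Everything else is a direct transcription of the argument already established for Lemma \ref{w0}, so I expect no serious obstacle here.
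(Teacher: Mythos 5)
Your proposal is correct and follows essentially the same route as the paper: observe that $q\in D$ forces $w=0$ on the fiber so $\rho^{-1}(q)=\{Q\}$, then use that each $\rho|_{C_i}\colon C_i\to E$ is finite and surjective to conclude $Q\in C_i$ for every $i$, and finally invoke Lemma \ref{w0} for the equivalence. The extra details you supply (checking $Q\in X_d$ and $Q\in\ms{C}$, and justifying surjectivity via the Galois orbit) are exactly the points the paper leaves implicit.
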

\begin{proof}
By definition, $\rho^{-1}(q)=\{Q\}$ and $Q\in\ms{C}$. Since each $C_i$ is mapped  onto $E$ under the branched covering map $\rho:X_d\to\bb{P}^2$, the intersection $\rho^{-1}(q)\cap C_i$ is nonempty. Hence $Q\in C_i$ as desired. 

The last assertion follows immediately by Lemma \ref{w0}.
\end{proof}

\subsection{Intersection multiplicities}
From formula \eqref{eq:C2} and the discussions in Section \ref{sec:str}, to obtain a lower bound for $C^2$, we seek to estimate the sum
\[
\sum_{i\neq j}I_P(C_i,C_j),
\]
where $P=[x_0,y_0,z_0,0]$ with $p=[x_0,y_0,z_0]\in D\cap E$. Our key technical results are summarized as follows.
\begin{prop}\label{intnum}
With the notations above, we have
\begin{enumerate}[{\rm (A)}]
\item $m=1$; in other words, $\ms{C}$ is a reduced scheme.
\item The following equality holds:
\[
\mu_P(\ms{C})=(d-1)i_p(D,E)+d\mu_p(E)-d+1,
\]
where $i_p(D,E)$ denotes the intersection multiplicity of the plane curves $D$ and $E$ at the point $p$.
\item We have
\[
\sum_{i\neq j}I_P(C_i,C_j)=(d-1) i_p(D,E)+d\mu_p(E)-k\mu_P(C)-(d-k),
\]
and thus
\[
\sum_{i\neq j}C_i\cdot C_j=d(d-1)e+d\sum_{p\in D\cap E}\mu_p(E)-k\sum_{p\in D\cap E}\mu_{\rho^{-1}(p)}(C)-(d-k)\#(D\cap E).
\]
\end{enumerate}
\end{prop}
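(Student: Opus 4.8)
The plan is to reduce all three parts to a local computation at a fixed point $P=[x_0,y_0,z_0,0]$ lying over $p=[x_0,y_0,z_0]\in D\cap E$, by means of a single coordinate change that straightens $D$. Working in the chart $x\neq0$ with $z_0\neq0$ and taking $(u,\omega)$ as local coordinates on the smooth surface $X_d$, recall that $\ms C$ is cut out by $h(u,v)=0$ subject to $l(u,v)+\omega^d=0$. Since $D=\{x^d+y^d+z^d=0\}$ is a smooth Fermat curve, $l_v(0,0)=d(z_0/x_0)^{d-1}\neq0$, so $\lambda:=l(u,v)$ together with $u$ is an analytic coordinate system on the $(u,v)$-plane in which $D=\{\lambda=0\}$. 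Writing $h(u,v)=\tilde h(u,\lambda)$ and using $l+\omega^d=0\Leftrightarrow\lambda=-\omega^d$ on $X_d$, the local equation of $\ms C$ becomes simply
\[
H(u,\omega)=\tilde h(u,-\omega^d)=0,
\]
i.e. $\ms C$ is the pullback of $E$ under the cyclic degree-$d$ map $\pi\colon(u,\omega)\mapsto(u,-\omega^d)$ branched along $D$. Everything below is read off this normal form; since Milnor numbers and intersection multiplicities are coordinate-independent, $\mu_0(\tilde h)=\mu_p(E)$ and $\ord_u\tilde h(u,0)=i_p(D,E)$.

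For part (A) I would argue that $m=1$ by inspecting a generic point of $C=C_1$: its image is a smooth point of $E$ off $D$, where $\rho$ is étale, so $\ms C=\rho^{-1}(E)$ is smooth, hence reduced, giving multiplicity $1$ along $C_1$. As $(g)=m(C_1+\cdots+C_k)$ has the same multiplicity $m$ along every component, $m=1$. (The same follows from the normal form: when $E\neq D$ no branch of $E$ lies in $D$, so $\tilde h(u,0)\not\equiv0$ and $\lambda\mapsto-\omega^d$ produces no repeated factors.)

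Part (B) is the technical heart, and I would prove it directly from the normal form by computing the Jacobian ideal of $H$. One has $H_u=\tilde h_u(u,-\omega^d)$ and $H_\omega=-d\,\omega^{d-1}\tilde h_\lambda(u,-\omega^d)$, so, up to units,
\[
\mu_P(\ms C)=\dim_{\bb C}\bb C\{u,\omega\}/\bigl(\tilde h_u(u,-\omega^d),\ \omega^{d-1}\tilde h_\lambda(u,-\omega^d)\bigr).
\]
Because $\ms C$ has an isolated singularity the two generators are coprime, so this colength is the intersection multiplicity $i_0\bigl(\{\tilde h_u(u,-\omega^d)=0\},\{\omega^{d-1}\tilde h_\lambda(u,-\omega^d)=0\}\bigr)$, which splits additively as
\[
(d-1)\,i_0\bigl(\{\tilde h_u(u,-\omega^d)=0\},\{\omega=0\}\bigr)+i_0\bigl(\{\tilde h_u(u,-\omega^d)=0\},\{\tilde h_\lambda(u,-\omega^d)=0\}\bigr).
\]
The first term equals $\ord_u\tilde h_u(u,0)=\ord_u\tilde h(u,0)-1=i_p(D,E)-1$. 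For the second term I would invoke the projection formula for the finite map $\pi$ of degree $d$ (whose only fibre over the origin is the origin): since both factors are pullbacks of $\{\tilde h_u=0\}$ and $\{\tilde h_\lambda=0\}$, their intersection multiplicity scales by $\deg\pi=d$, giving $d\cdot i_0(\{\tilde h_u=0\},\{\tilde h_\lambda=0\})=d\,\mu_0(\tilde h)=d\,\mu_p(E)$. Adding the two contributions yields $\mu_P(\ms C)=(d-1)i_p(D,E)+d\,\mu_p(E)-d+1$, which is (B). The main obstacle is exactly here: justifying the normal form (smoothness of $D$ at $p$, the coordinate change, and $E\neq D$ so all quantities are finite) and verifying the projection-formula step carefully—equivalently, carrying out the underlying Puiseux/Newton-polygon analysis of the cyclic cover $\omega^d=-l$ over each branch of $E$.

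For part (C) I would pass from Milnor numbers to intersection numbers via the standard identities for plane curve germs. Setting $S=\sum_{i\neq j}I_P(C_i,C_j)$, the decomposition $\delta_P(\ms C)=\sum_i\delta_P(C_i)+\tfrac12 S$ and the branch count $r_P(\ms C)=\sum_i r_P(C_i)$, combined with $\mu=2\delta-r+1$ applied to $\ms C$ and to each $C_i$, give
\[
\mu_P(\ms C)=k\,\mu_P(C)+S-k+1,
\]
where I use that the Galois group $\bb Z_d$ fixes $P$ (as $w_0=0$) and permutes the $C_i$ transitively, so $\delta_P(C_i)=\delta_P(C)$ and $r_P(C_i)=r_P(C)$; note that the unknown invariants $\delta_P(\ms C),r_P(\ms C),\delta_P(C),r_P(C)$ all cancel. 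Solving for $S$ and inserting (B) yields the per-point formula
\[
S=(d-1)i_p(D,E)+d\,\mu_p(E)-k\,\mu_P(C)-(d-k).
\]
Finally, summing over all $p\in D\cap E$, using $\sum_p i_p(D,E)=D\cdot E=de$ by Bézout and $P=\rho^{-1}(p)$, produces the global formula for $\sum_{i\neq j}C_i\cdot C_j$ asserted in (C).
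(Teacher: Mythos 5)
Your argument is correct, and for the technical heart, part (B), it takes a genuinely different route from the paper. The paper never straightens $D$: it Puiseux-factors $h=h_1\cdots h_r$, writes the local equation of $\ms C$ as $\Phi=\prod_{i,\al}\bigl(\omega^d+l(u,s_{i,\al}(u))\bigr)$, and extracts $\mu_P(\ms C)$ from $o_u(\Disc(\Phi))$ via the multiplicativity of discriminants and resultants (its Lemma \ref{dis} plus Proposition \ref{res}(vi)), using repeatedly the key estimate $l(u,s_{i,\al})-l(u,s_{j,\be})\simeq s_{i,\al}-s_{j,\be}$. Your normal form $H(u,\omega)=\tilde h(u,-\omega^d)$, obtained from the coordinate change $\lambda=l(u,v)$ (legitimate since $l_v(0,0)=d(z_0/x_0)^{d-1}\neq 0$ and $l(0,0)=0$ because $p\in D$), packages all of that into the single statement that $\ms C$ is the pullback of $E$ under the cyclic cover $\pi:(u,\omega)\mapsto(u,-\omega^d)$; the Jacobian computation $i_0\bigl(\tilde h_u\circ\pi,\ \omega^{d-1}\,\tilde h_\lambda\circ\pi\bigr)=(d-1)(i_p(D,E)-1)+d\,\mu_p(E)$ then replaces the paper's items $(b_1)$--$(b_4)$, with the projection formula $i_0(f\circ\pi,g\circ\pi)=d\,i_0(f,g)$ justified by the freeness of $\bb C\{u,\omega\}$ over $\bb C\{u,\omega^d\}$. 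This is cleaner and bypasses the Puiseux/resultant machinery entirely (it also does not need $h$ to be $v$-general, only $E\neq D$, which follows from $k\geq 2$); the paper's factor-by-factor computation, on the other hand, yields the reducedness statement (A) and the pairwise coprimality of the branches as byproducts of the same calculation. Your argument for (A) via the \'etale locus is exactly the alternative the paper itself records in the remark following its proof of (A), and your part (C) --- the identity $\mu_P(\ms C)=\sum_i\mu_P(C_i)+S-k+1$ (which you rederive from $\mu=2\delta-r+1$ rather than citing it as Proposition \ref{mul1}(v)), Galois transitivity giving $\mu_P(C_i)=\mu_P(C)$, and B\'ezout $D\cdot E=de$ --- coincides with the paper's.
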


The proof will be presented in Section \ref{sec:intnum}.

Now we use the above results to study the Bounded Negativity Conjecture for Fermat surfaces. It is clear that we only need to consider negative curves $C$. Then the curve $\ms{C}$ necessarily has $k\geq2$ irreducible components since otherwise $C^2$ would be nonnegative. Moreover, the number $k$ divides $d$ since the Galois action for $\rho$ is a cyclic action of order $d$. To ensure that the polynomial $h$ is $v$-general, we only need to assume $C^2\neq 2-d$; see the discussions in Section \ref{sec:str}. Therefore, from Lemma \ref{curvST} and Proposition \ref{intnum} together with the equality
\[
kC^2=de^2-\sum_{i\neq j}C_i\cdot C_j,
\]
we get the following theorem.
\begin{thm}[Theorem \ref{BNCFermat}]\label{main}
Let $d\geq5$. Suppose that $C$ is a reduced and irreducible curve on $X_d$ with self-intersection number $C^2<2-d<0$. Then the following properties hold.
\begin{enumerate}[(1)]
\item There exists a unique reduced and irreducible curve $E\subset\bb{P}^2$ defined by a degree $e$ polynomial $g(x,y,z)$, such that the cone in $\bb{P}^3$ determined by $g$ contains $C$. 
\item There exists $k\geq2$ and $k|d$ such that
\begin{align*}
C^2=&\frac{d}{k}e^2-\frac{d(d-1)}{k}e-\frac{d}{k}\sum_{p\in D\cap E}\mu_p(E)\\
   &+\sum_{[x,y,z]\in D\cap E}\mu_{[x,y,z,0]}(C)+\left(\frac{d}{k}-1\right)\#(D\cap E).
\end{align*}
In particular, when $d$ is prime, we have $k=d$ and
\[
C^2=e^2-(d-1)e-\sum_{p\in D\cap E}\mu_p(E)+\sum_{[x,y,z]\in D\cap E}\mu_{[x,y,z,0]}(C).
\]
\end{enumerate}
\end{thm}

As a corollary, we get two sufficient conditions for the Bounded Negativity Conjecture to hold on Fermat surfaces.

\begin{cor}[Corollary \ref{cor:sufcond}]\label{sufcond}
With the notations above, the Bounded Negativity Conjecture holds on the Fermat surface $X_d$ if one of the following conditions is satisfied:
\begin{enumerate}[(a)]
\item
\[
\liminf_{e\to\infty}\inf_{E}\left(e^2-(d-1)e-\sum_{p\in D\cap E}\mu_p(E)\right)\geq -\lambda
\]
for some constant $\lambda>0$ which may depend on $d$; or
\item
\[
\limsup_{e\to\infty}\sup_E\frac{\sum\limits_{p\in D\cap E}\mu_p(E)}{e^2}<1;
\]
\end{enumerate}
where, in the above two formulae, the curve $E$ ranges over all irreducible projective plane curves of degree $e$.
\end{cor}
\begin{proof}
If (a) holds, there exists $e_0>d+1$ such that
\[
\inf_{E}\left(e^2-(d-1)e-\sum_{p\in D\cap E}\mu_p(E)\right)\geq-(\lambda+1)
\]
for $e\geq e_0$. On the other hand, the geometric genus $g(E)$ of $E$ is given by
\[
g(E)=\frac{(e-1)(e-2)}{2}-\sum_{p\in E}\frac{1}{2}\left(\mu_p(E)+r_p-1\right)
\]
by \cite[Cor.7.1.3]{Wal0401}, where $r_p$ is the number of local branches of $E$ at $p$. It follows that
\[
\sum_{p\in D\cap E}\mu_p(E)\leq\sum_{p\in E}\mu_p(E)\leq (e-1)(e-2)
\]
Hence, for $e<e_0$, we have
\begin{eqnarray*}
e^2-(d-1)e-\sum_{p\in D\cap E}\mu_p(E)&\geq& e^2-(d-1)e-(e-1)(e-2)\\
                                      &=&(4-d)e-2\\
                                      &\geq&-(d-4)e_0-2.
\end{eqnarray*}
Therefore, we may take
\[
b_{X_d}=\max\{\lambda+1,(d-4)e_0+2\}.
\]

The condition (b) implies (a), as can be seen as follows: if (b) holds, then for $e$ sufficiently large, we have
\[
e^2-(d-1)e-\sum_{p\in D\cap E}\mu_p(E)\geq e^2-(d-1)e-(1-\al)e^2=\al e^2-(d-1)e.
\]
for some $\al>0$. The desired result follows from 
\[
\al e^2-(d-1)e=\al\left(e-\frac{d-1}{2\al}\right)^2-\frac{(d-1)^2}{4\al}\geq-\frac{(d-1)^2}{4\al}.
\]
\end{proof}

\subsection{An invariant} From Corollary \ref{sufcond} (b), we are led to consider the following invariant
\[
\limsup_{e\to\infty}\sup_E\frac{\sum\limits_{p\in D\cap E}\mu_p(E)}{e^2}.
\]
More generally, given any reduced homogeneous polynomial $f$ of degree $d$, we define
\[
I_f=\limsup_{e\to\infty}\sup_{g\ \mathrm{irreducible}}\frac{\sum\limits_{p\in V(f)\cap V(g)}\mu_p\left(V(g)\right)}{e^2},
\]
where $V(g)$ denotes the projective plane curve defined by $g$. We may ask the following questions which are possibly of independent interest in singularity theory of projective plane curves.

\begin{prob}
\begin{enumerate}
\item  If $f=x^d+y^d+z^d$, is it true that $I_f<1$?
\item More generally, if $f$ defines a smooth projective curve in $\bb{P}^2$, does it hold that $I_f<1$?
\item Does the invariant $I_f$ depend on the equation of $f$, or at least the degree $d$ of $f$?
\end{enumerate}
\end{prob}

\section{Puiseux series, resultants, and intersection multiplicities}\label{sec:intnum}
In this section, we give the proof of Proposition \ref{intnum}, by using Puiseux series and resultants. Since these techniques have become standard tools in algebraic curve theory, with well-established references available. For conciseness, we just introduce simplified notations adapted to our context and refer to standard books for comprehensive definitions and properties.

\subsection{Puiseux series} For standard definitions, notations and properties of Puiseux series, we refer to \cite[Chap.1]{Cas0001}. For convenience, for a non-zero Puiseux series
\[
s(u)=\sum_{i=1}^\infty a_iu^{\frac{i}{n}},
\]
with polydromy order $\nu(s)=n$, we set
\[
s_{,\al}(u)=\sum_{i=1}^\infty\exp\left(\frac{2\pi\sqrt{-1}\al i}{n}\right)a_iu^{\frac{i}{n}}
\]
for $\al=1,\ldots, n$; this notation will be used to simplify the expression of decomposing an irreducible polynomial into a product of Puiseux series. Moreover, for $s,s'\in\bb{C}\llangle u\rrangle$, denote $s\simeq s'$ if
\[
s=(a+s'')s'
\]
for some $a\in\bb{C}^*$ and $s''$ a Puiseux series; we will only use the property that $s\simeq s'$ implies $o_u(s)=o_u(s')$ (here, $o_u(s)$ denotes the order of $s$ in $u$ \cite[Sec.1.2]{Cas0001}).

\subsection{Resultants, discriminants, and intersection multiplicities} The references abound. For the definition and basic properties, the interested reader may consult \cite[Sec.1.4]{BenRis9001}, \cite[Chap.12]{GKZ94},\cite[Chap.2]{Cas0001}, \cite[Chap.I, Sec.3]{GreLosShu0701}, and \cite[Chap.1]{Wal0401}. The notations used in different references varying a little, we will fix notations that are suited for our purpose.

Suppose $F,G$ are two reduced curve germs in $(\bb{C}^2,0)$, defined by
\[
F:\ f(u,v)=0,
\]
and
\[
G:\ g(u,v)=0.
\]
By the Weierstrass preparation theorem (see \cite[Thm.1.8.7]{Cas0001}), we may assume that $f,g$ are Weierstrass polynomials:
\[
f=v^d+a_1(u)v^{d-1}+\cdots+a_d(u),
\]
and
\[
g=v^e+b_1(u)v^{e-1}+\cdots+b_e(u),
\]
where $a_i, b_j\in\bb{C}\{u\}$ satisfy $a_i(0)=b_j(0)=0$ for $1\leq i\leq d,1\leq j\leq e$. For convenience, we will regard $a_i,b_j$ as elements of $\bb{C}\llangle u\rrangle$ by the inclusion $\bb{C}\{u\}\subset\bb{C}\llangle u\rrangle$, and thus regard $f,g$ as polynomials in $\bb{C}\llangle u\rrangle[v]$.

The \textit{resultant} $\Res(f,g)$ is
\[
\Res(f,g)=\prod_{\substack{1\leq i\leq d\\ 1\leq j\leq e}}(\lambda_i-\mu_j),
\]
where $\lambda_1,\cdots,\lambda_d$ and $\mu_1,\cdots,\mu_e$ are respectively the roots of $f,g$ in $\bb{C}\llangle u\rrangle$. In addition, the \textit{discriminant} $\Disc(f)$ is 
\[
\Disc(f)=\prod_{i<j}(\lambda_i-\lambda_j)^2.
\]

The \textit{intersection multiplicity} of $F$ and $G$ at $0$, denoted by $i_0(F,G)$, which is equal to the intersection multiplicity of $f$ and $g$ at $0$, denoted by $i_0(f,g)$, is 
\[
i_0(F,G)=i_0(f,g)=\dim_{\bb{C}}\frac{\bb{C}\{u,v\}}{(f,g)}=o_u\left(\Res(f,g)\right),
\]
where the last equality is the Halphen formula \cite[Prop.2.6.3]{Cas0001}. In addition, the Milnor number of $F$ at $0$, denoted by $\mu_0(F)$ or $\mu_0(f)$, is
\[
\mu_0(F)=\mu_0(f)=i_0\left(\frac{\p f}{\p u},\frac{\p f}{\p v}\right)=\dim_{\bb{C}}\frac{\bb{C}\{u,v\}}{\left(\frac{\p f}{\p u},\frac{\p f}{\p v}\right)}.
\]

We establish the relationship between the Minor number and the discriminant.
\begin{lem}\label{dis}
With the notations above, we have
\[
o_u\left(\Disc(f)\right)=\mu_0(f)+d-1.
\]
\end{lem}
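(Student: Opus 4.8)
The plan is to connect the discriminant $\Disc(f)$ to the Milnor number via the resultant formula in Proposition \ref{res}(i). By that result we have
\[
\Disc(f)=\frac{(-1)^{d(d-1)/2}}{a_0}\Res\left(f,\frac{\p f}{\p v}\right),
\]
and since $f$ is a Weierstrass polynomial we have $a_0=1$. Taking orders with respect to $u$, the unit $(-1)^{d(d-1)/2}$ contributes nothing, so $o_u(\Disc(f))=o_u(\Res(f,\p f/\p v))$. By the corollary to Halphen's formula, Proposition \ref{mul1}(iv), this order equals the intersection multiplicity: $o_u(\Res(f,\p f/\p v))=i_0(f,\p f/\p v)$. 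So the problem reduces to showing
\[
i_0\!\left(f,\frac{\p f}{\p v}\right)=\mu_0(f)+d-1.
\]

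First I would recall that $\mu_0(f)=i_0(\p f/\p u,\p f/\p v)$ by definition, so the target identity compares the intersection of $\p f/\p v$ against $f$ versus against $\p f/\p u$. The natural way to extract the difference is to use that, modulo the ideal generated by $\p f/\p v$, the two elements $f$ and $\p f/\p u$ are related. Concretely, I would pass to the quotient ring $R=\bb{C}\{u,v\}/(\p f/\p v)$ and compare $\dim_{\bb{C}} R/(f)$ with $\dim_{\bb{C}} R/(\p f/\p u)$. The cleanest route is to observe that on the curve germ defined by $\p f/\p v=0$, the function $f$ and the function $u\cdot\p f/\p u$ differ by a controlled amount: by Euler-type or Teissier-type reasoning, one has the classical identity relating $i_0(f,\p f/\p v)$, $\mu_0(f)$, and the multiplicity data of $f$. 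Specifically, Teissier's lemma states that for a Weierstrass polynomial of degree $d$ in $v$,
\[
i_0\!\left(f,\frac{\p f}{\p v}\right)=\mu_0(f)+d-1,
\]
where $d$ is exactly the $v$-general order (the intersection multiplicity of $f$ with a generic line, here the $u$-axis direction captured by the degree in $v$).

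To prove this self-contained using only the tools in the excerpt, I would factor $f=f_1\cdots f_k$ into irreducibles and apply Proposition \ref{mul1}(v) for $\mu_0$ together with Proposition \ref{res}(vi) for $\Disc$, reducing to the irreducible case. For irreducible $f$ with a root $s$ of polydromy order $\nu(s)=d$, Proposition \ref{res}(i) rewrites $\Disc(f)$ as $(-1)^{d(d-1)/2}\prod_{i\neq j}(s_{,i}-s_{,j})$, and I would compute $o_u\!\left(\prod_{i\neq j}(s_{,i}-s_{,j})\right)$ directly: the order of each difference $s_{,i}-s_{,j}$ is read off from the characteristic exponents of $s$, and summing via the standard formula for $\sum_{i\neq j}o_u(s_{,i}-s_{,j})$ yields $\mu_0(f)+d-1$ by the well-known expression of the Milnor number of an irreducible germ in terms of its semigroup or characteristic exponents. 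The main obstacle I anticipate is carrying out this order computation cleanly in the irreducible case — tracking how the polydromy conjugates $s_{,i}$ pair up and correctly matching the resulting exponent sum to the conductor/Milnor-number formula — since the factorization steps via Propositions \ref{mul1}(v) and \ref{res}(vi) are then purely formal bookkeeping. Because $\mu_0(f)+d-1$ is additive in exactly the way the discriminant factorization formula prescribes (each cross-resultant $\Res(f_i,f_j)^2$ contributes $2i_0(f_i,f_j)$ to the order, matching the $\sum_{i\neq j}i_0(f_i,f_j)$ term), the reduction to the irreducible case is the conceptually safe part, and the irreducible order estimate is where the real content lies.
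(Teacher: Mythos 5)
Your argument is correct in outline but takes a genuinely different route from the paper after the first (common) step. Both proofs begin by identifying $o_u\left(\Disc(f)\right)$ with $i_0\left(f,\frac{\p f}{\p v}\right)$ via Proposition \ref{res}(i) and the corollary of Halphen's formula. From there the paper is very short: it invokes \cite[Lem.6.5.7]{Wal0401}, which says that $i_0\left(pf+qu\frac{\p f}{\p u},\frac{\p f}{\p v}\right)$ is independent of $(p,q)\neq(0,0)$ with $p,q\geq0$, so that $i_0(f,f_v)=i_0(uf_u,f_v)=i_0(u,f_v)+\mu_0(f)=(d-1)+\mu_0(f)$. You instead reduce to the irreducible case via Proposition \ref{res}(vi) and Proposition \ref{mul1}(v); this bookkeeping does close up correctly, since each $\Res(f_i,f_j)^2$ contributes $2i_0(f_i,f_j)$ to the order and the $-k+1$ in Proposition \ref{mul1}(v) exactly absorbs the discrepancy between $\sum_i(d_i-1)$ and $d-1$. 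You then settle the irreducible case by computing $\sum_{\al\neq\be}o_u\left(s_{,\al}-s_{,\be}\right)$ from the characteristic exponents and matching it against the Milnor number of an irreducible germ. That computation is indeed where the real content lies, and you defer it to classical theory just as the paper defers its key step to Wall; the paper's citation is more economical, while your decomposition has the virtue of running exactly parallel to the argument the paper itself uses later for $\Phi=\phi_1\cdots\phi_r$ in the proof of Proposition \ref{intnum}(B). One small correction: your parenthetical gloss of Teissier's lemma is off. Here $d$ is the $v$-degree of the Weierstrass polynomial, i.e.\ $i_0(f,\{u=0\})$, which strictly exceeds the multiplicity $m_0(f)$ (the intersection with a generic line) whenever the $v$-axis fails to be transverse to $f$; the identity actually needed, and the one both your Puiseux computation and the paper's polar-invariance argument deliver, is the non-generic version $i_0(f,f_v)=\mu_0(f)+i_0(f,\{u=0\})-1$.
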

\begin{proof}
By \cite[Lem.6.5.7]{Wal0401}, we have that the number 
\[
i_0\left(pf+qu\frac{\p f}{\p u},\frac{\p f}{\p v}\right)
\]
is independent of $(p,q)$ provided $p,q\geq0$ and $(p,q)\neq(0,0)$. It follows that
\[
o_u\left(\Disc(f)\right)=i_0\left(f,\frac{\p f}{\p v}\right)=i_0\left(u\frac{\p f}{\p u},\frac{\p f}{\p v}\right)=i_0\left(u,\frac{\p f}{\p v}\right)+\mu_0(f).
\]
Since $f=v^d+\sum a_i(u)v^{d-i}$, we get
\[
i_0\left(u,\frac{\p f}{\p v}\right)=i_0(u, v^{d-1})=d-1.
\]
Hence, we get the desired equality $o_u\left(\Disc(f)\right)=\mu_0(f)+d-1$.
\end{proof}

\subsection{Proof of Proposition \ref{intnum}}
Following the notations in Section \ref{sec:str}, assume $h$ is $v$-general of order $b$, i.e.,
\[
h(0,v)=c_b v^b+c_{b+1}v^{b+1}+\cdots,
\]
for some constant $c_b\neq0$. 

We first decompose $h$ in $\bb{C}\{u,v\}$ into irreducible factors
\[
h= h_0 h_1\cdots h_r;
\]
where $h_1,\ldots, h_r$ are irreducible Weierstrass polynomial in $\bb{C}\{u\}[v]$ of degrees $b_1,\ldots, b_r$ respectively, and $h_0$ is an invertible element in $\bb{C}\{u,v\}$, and $b=b_1+\cdots+b_r$.

For $1\leq i\leq r$, let $s_i$ be a Puiseux series which is a root of $h_i$. By \cite[Cor.1.5.8, Thm.1.8.3]{Cas0001}, we have $\nu(s_i)=b_i$ and
\[
h_i=\prod_{\al=1}^{b_i}\left(v-s_{i,\al}(u)\right),\ 1\leq i\leq r.
\]

Set
\[
\phi_i(u,\omega)=\prod_{\al=1}^{b_i}\left(\omega^d+l\left(u,s_{i,\al}(u)\right)\right),\ 1\leq i\leq r,
\]
and
\[
\Phi(u,\omega)=\phi_1\cdots\phi_r.
\]
Then $\Phi$ is a Weierstrass polynomial in $\bb{C}\{u\}[\omega]$ of degree $bd$ and moreover, the scheme $\ms{C}$ is locally defined by the equation $\Phi(u,\omega)=0$ in the local coordinate system $(u,\omega)$ on $X_d$.

(A) To show that $\ms{C}$ is reduced, it is sufficient to show 
\begin{enumerate}
\item[($a_1$)] For $1\leq i\leq r$, $\phi_i$ is reduced; and
\item[($a_2$)] For $1\leq i<j\leq r$, $\phi_i$ and $\phi_j$ are coprime as elements in $\bb{C}\llangle u\rrangle[\omega]$.
\end{enumerate}
For ($a_1$),  by definition of the intersection multiplicity, we have
\[
o_u\left(l\left(u,s_{i,\al}(u)\right)\right)=\frac{i_0(h_i,l)}{b_i}\leq\frac{i_p(D,E)}{b_i}<\infty.
\]
In particular, $l\left(u,s_{i,\al}(u)\right)\neq0$; and thus $\omega^d+l\left(u,s_{i,\al}(u)\right)$ is reduced. Moreover, recall that 
\[
l(u,v)=1+\left(u+\frac{y_0}{x_0}\right)^d+\left(v+\frac{z_0}{x_0}\right)^d,
\] 
hence, for $\al\neq\be$,
\begin{eqnarray*}
 & &l\left(u,s_{i,\al}(u)\right)-l\left(u,s_{i,\be}(u)\right)\\
 &=&\left(s_{i,\al}(u)-s_{i,\be}(u)\right)\left(d\left(\frac{z_0}{x_0}\right)^{d-1}+r(u)\right)\\
 &\simeq&s_{i,\al}(u)-s_{i,\be}(u)
\end{eqnarray*}
where $\lim\limits_{u\to0}r(u)=0$ and $\simeq$ means equality up to multiplication by an invertible element. Since $\nu(s_i)=b_i$ and $\al\neq\be$, we have
\[
s_{i,\al}(u)-s_{i,\be}(u)\neq0,
\]
and thus
\[
\gcd\left(\omega^d+l\left(u,s_{i,\al}(u)\right),\omega^d+l\left(u,s_{i,\be}(u)\right)\right)=1.
\]
Consequently, $\phi_i$ is reduced.

For ($a_2$), it suffices to show that for any $i\neq j$ and for all $\al\leq b_i,\be\leq b_j$,
\[
l\left(u,s_{i,\al}(u)\right)\neq l\left(u,s_{j,\be}(u)\right).
\]
Similar to the discussions above, we have
\[
l\left(u,s_{i,\al}(u)\right)-l\left(u,s_{j,\be}(u)\right)\simeq s_{i,\al}(u)-s_{j,\be}(u).
\]
Since $h_i$ and $h_j$ are coprime, they do not have common roots in $\bb{C}\llangle u\rrangle$; therefore $s_{i,\al}(u)-s_{j,\be}(u)\neq0$.

\begin{rk}
Proposition \ref{intnum} (A) also follows from the fact that for all $q\in E\setminus(E\cap D)$, the inverse image $\rho^{-1}(q)\subset\ms{C}$ consists of exactly $d$ points.
\end{rk}

(B) To compute $\mu_P(\ms{C})$, we use Lemma \ref{dis} and get
\[
\mu_P(\ms{C})=o_u\left(\Disc(\Phi)\right)-bd+1.
\]
Hence, we need to calculate $o_u\left(\Disc(\Phi)\right)$. 

From $\Phi=\phi_1\cdots\phi_r$, we have
\[
\Disc(\Phi)=\left(\prod_{i=1}^r\Disc(\phi_i)\right)\left(\prod_{1\leq i<j\leq r}\Res(\phi_i,\phi_j)^2\right),
\]
and thus
\[
o_u\left(\Disc(\Phi)\right)=\sum_{i=1}^ro_u\left(\Disc(\phi_i)\right)+\sum_{1\leq i<j\leq r}o_u\left(\Res(\phi_i,\phi_j)^2\right).
\]
Similarly, for $1\leq i\leq r$,
\begin{eqnarray*}
 & &o_u\left(\Disc(\phi_i)\right)\\
 &=&\sum_{\al=1}^{b_i}o_u\left(\Disc\left(\omega^d+l\left(u,s_{i,\al}(u)\right)\right)\right)\\
 & &+\sum_{1\leq\al<\be\leq b_i}o_u\left(\Res\left(\omega^d+l\left(u,s_{i,\al}(u)\right),\omega^d+l\left(u,s_{i,\be}(u)\right)\right)^2\right).
\end{eqnarray*}

We will prove the following results.
\begin{enumerate}
\item[($b_1$)] For $1\leq i\leq r$, 
\[
\sum_{\al=1}^{b_i}o_u\left(\Disc\left(\omega^d+l\left(u,s_{i,\al}(u)\right)\right)\right)=(d-1)i_0(h_i,l),
\]
\item[($b_2$)] For $1\leq i\leq r$,
\begin{eqnarray*}
 & &\sum_{1\leq\al<\be\leq b_i}o_u\left(\Res\left(\omega^d+l\left(u,s_{i,\al}(u)\right),\omega^d+l\left(u,s_{i,\be}(u)\right)\right)^2\right)\\
 &=&do_u\left(\Disc(h_i)\right)\\
 &=&d\left(\mu_0(h_i)+b_i-1\right).
\end{eqnarray*}
\item[($b_3$)] For $1\leq i<j\leq r$,
\[
o_u\left(\Res(\phi_i,\phi_j)^2\right)=do_u\left(\Res(h_i,h_j)^2\right).
\]
\item[($b_4$)] We have
\[
o_u\left(\Disc(\Phi)\right)=(d-1)i_p(D,E)+d\left(\mu_p(E)+b-1\right).
\]
\end{enumerate}

For ($b_1$), we have
\begin{eqnarray*}
\Disc\left(\omega^d+l\left(u,s_{i,\al}(u)\right)\right)&\simeq&\Res\left(\omega^d+l\left(u,s_{i,\al}(u)\right), \omega^{d-1}\right)\\
                                                                         &\simeq&\left(l\left(u,s_{i,\al}(u)\right)\right)^{d-1};
\end{eqnarray*}
hence,
\begin{eqnarray*}
o_u\left(\Disc\left(\omega^d+l\left(u,s_{i,\al}(u)\right)\right)\right)&=&(d-1)o_u\left(l\left(u,s_{i,\al}(u)\right)\right)\\
                                                                                         &=&\frac{(d-1)i_0(h_i,l)}{b_i}.
\end{eqnarray*}
Summing over $\al=1,\ldots, b_i$ gives the desired equality.

For ($b_2$), we have
\begin{eqnarray*}
& &\Res\left(\omega^d+l\left(u,s_{i,\al}(u)\right),\omega^d+l\left(u,s_{i,\be}(u)\right)\right)\\
&=&\Res\left(\omega^d+l\left(u,s_{i,\al}(u)\right),l\left(u,s_{i,\be}(u)\right)-l\left(u,s_{i,\al}(u)\right)\right)\\
&\simeq&\left(l\left(u,s_{i,\be}(u)\right)-l\left(u,s_{i,\al}(u)\right)\right)^d.
\end{eqnarray*}
In the proof of (A), we have shown that
\[
l\left(u,s_{i,\be}(u)\right)-l\left(u,s_{i,\al}(u)\right)\simeq s_{i,\be}(u)-s_{i,\al}(u).
\]
Hence,
\begin{eqnarray*}
& &\Res\left(\omega^d+l\left(u,s_{i,\al}(u)\right),\omega^d+l\left(u,s_{i,\be}(u)\right)\right)\\
&\simeq&\left(s_{i,\be}(u)-s_{i,\al}(u)\right)^d.
\end{eqnarray*}
It follows that
\begin{eqnarray*}
& &\sum_{1\leq\al<\be\leq b_i}o_u\left(\Res\left(\omega^d+l\left(u,s_{i,\al}(u)\right),\omega^d+l\left(u,s_{i,\be}(u)\right)\right)^2\right)\\
&=&d\sum_{1\leq\al<\be\leq b_i}o_u\left(s_{i,\be}(u)-s_{i,\al}(u)\right)^2\\
&=&d o_u\left(\Disc(h_i)\right)\\
&=&d o_u\left(\mu_0(h_i)+b_i-1\right),
\end{eqnarray*}
where the last equality follows from Lemma \ref{dis}.

For ($b_3)$, we have
\begin{eqnarray*}
 & &\Res(\phi_i,\phi_j)^2\\
 &=&\prod_{\al=1}^{b_i}\prod_{\be=1}^{b_j}\Res\left(\omega^d+l\left(u,s_{i,\al}(u)\right),\omega^d+l\left(u,s_{j,\be}(u)\right)\right)^2\\
 &\simeq&\prod_{\al=1}^{b_i}\prod_{\be=1}^{b_j}\Res\left(\omega^d+l\left(u,s_{i,\al}(u)\right),l\left(u,s_{j,\be}(u)\right)-l\left(u,s_{i,\al}(u)\right)\right)^2\\
 &\simeq&\prod_{\al=1}^{b_i}\prod_{\be=1}^{b_j}\left(l\left(u,s_{j,\be}(u)\right)-l\left(u,s_{i,\al}(u)\right)\right)^{2d}\\
 &\simeq&\prod_{\al=1}^{b_i}\prod_{\be=1}^{b_j}\left(s_{j,\be}(u)-s_{i,\al}(u)\right)^{2d}\\
 &\simeq&\Res(h_i,h_j)^{2d}.
\end{eqnarray*}
It follows that
\[
o_u\left(\Res(\phi_i,\phi_j)^2\right)=do_u\left(\Res(h_i,h_j)^2\right),
\]
as desired.

For ($b_4$), we combine ($b_1$)--($b_3$) and get
\begin{eqnarray*}
o_u\left(\Disc(\Phi)\right)&=&\sum_{i=1}^r\left((d-1)i_0(h_i,l)+do_u\left(\Disc(h_i)\right)\right)+\sum_{1\leq i<j\leq r}\left(do_u\left(\Res(h_i,h_j)^2\right)\right)\\
                           &=&(d-1)\sum_{i=1}^ri_0(h_i,l)+do_u\left(\Disc(h)\right)\\
                           &=&(d-1)i_0(h,l)+do_u\left(\Disc(h)\right).
\end{eqnarray*}
Note that $i_0(h,l)=i_p(E,D)$; and by Lemma \ref{dis}, we have
\[
 o_u\left(\Disc(h)\right)=\mu_p(E)+b-1.
\]
The equality in ($b_4$) follows immediately.

Now we are ready to finish the proof of (B). From ($b_4$) and the equality
\[
o_u\left(\Disc(\Phi)\right)=\mu_P(\ms{C})+bd-1,
\]
we get
\begin{eqnarray*}
\mu_P(\ms{C})&=&(d-1)i_p(D,E)+d\left(\mu_p(E)+b-1\right)-bd+1\\
             &=&(d-1)i_p(D,E)+d\mu_p(E)-d+1.
\end{eqnarray*}

(C) Since $C_1,\ldots, C_k$ are the irreducible components of $\ms{C}$, we have from \cite[Thm.6.5.1]{Wal0401} that
\[
\mu_P(\ms{C})=\sum_{i\neq j}I_P(C_i,C_j)+\sum_{i=1}^k\mu_P(C_i)-k+1.
\]
Note that $P$ is a fixed point under the action of the Galois group for the branched cover $\rho$, and the Galois group acts transitively on $\{C_1,\ldots,C_k\}$, hence
\[
\mu_P(C_i)=\mu_P(C),\ \mathrm{for\ all\ }i=1,\ldots, k.
\]
It follows that
\[
\sum_{i\neq j}i_P(C_i,C_j)=\mu_P(\ms{C})-k\mu_P(C)+k-1.
\]
Using (B), we get
\[
\sum_{i\neq j}i_P(C_i,C_j)=(d-1)i_p(D,E)+d\mu_p(E)-k\mu_P(C)-(d-k).
\]
Summing over all possible intersection points $P\in C_i\cap C_j$ with $i\neq j$ and using Lemma \ref{pos}, we obtain
\[
\sum_{i\neq j}C_i\cdot C_j=(d-1) D\cdot E+d\sum_{p\in D\cap E}\mu_p(E)-k\sum_{p\in D\cap E}\mu_{\rho^{-1}(p)}(C)-(d-k)\#(D\cap E).
\]
Note that $D\cdot E=de$, the equality in (C) follows immediately. 

The proof of Proposition \ref{intnum} is complete now.

\bigskip
{\bf Data Availability: }This manuscript has no associated data. All results are derived through rigorous mathematical proofs.

\bigskip
{\bf Funding Declaration: }This work was previously supported by the Chinese Academy of Sciences (CAS) under Grant No. YSBR-001.

\bigskip

\end{document}